\newtheorem{lemma}{Lemma}
\theoremstyle{definition}
\newtheorem{definition}{Definition}
\begin{document}
\title{Set Equality in Combinatorial Game Theory}
\author{Michael~J.~J.~Barry}
\address{15 River Street Unit 205\\
Boston, MA 02108}
\email{mbarry@allegheny.edu}

\subjclass[2010]{91-01, 91A46}

\begin{abstract}
In Combinatorial Game Theory, the fundamental relation of game equivalence, denoted by $=$, is introduced early on and overrides the notion of set equality.  We explore what happens if set equality is given its due before game equivalence is introduced.
\end{abstract}

\maketitle

\section{Introduction}

In this note, we argue for the benefits of highlighting set equality in the axiomatic approach to Combinatorial Game Theory, particularly in regard to the sum of games and the negative of a game.

\section{Abstract Games}
Throughout this paper $=$ denotes set equality not game equivalence.

Define the ordered pair $(\emptyset,\emptyset)$ by $0$.   In the next definition, we follow Siegel~\cite[Definition 1.1]{Siegel} but use ordered pairs instead of the usual $\{ \mid \}$ notation.
\begin{definition}
Let $\tilde{\mathbb{G}}_0=\{0\}$, and for $n \geq 0$ let
\[\tilde{\mathbb{G}}_{n+1}=\{(\mathscr{G}^L , \mathscr{G}^R)\mid \mathscr{G}^L, \mathscr{G}^R \subset \tilde{\mathbb{G}}_{n}\}.\]
Then a \textbf{short game} is an element of
\[\tilde{\mathbb{G}}=\bigcup_{n \geq 0} \tilde{\mathbb{G}}_{n}.\]
\end{definition}

Note that $\tilde{\mathbb{G}}_{n}$ is a finite set for each $n \geq0$.

Thus a short game $G$ is an ordered pair $(\mathscr{G}^L, \mathscr{G}^R)$, where $\mathscr{G}^L$ and $\mathscr{G}^R$ are sets of games.  Games in  $\mathscr{G}^L$ are called the \textbf{left options} of $G$ while games in  $\mathscr{G}^R$ are called the \textbf{right options} of $G$.  The game $0$ defined above is the  game with no options.  

\begin{definition}
\cite[Definition 1.27]{Siegel} Let $G$ be a short game.  The \textbf{formal birthday} of $G$, denoted by $\tilde{b}(G)$, is the least $n$ such that $G \in \tilde{\mathbb{G}}_{n}$.
\end{definition}

Obviously, $\tilde{b}(G)=0$ if and only if $G=0$.  And if $\tilde{b}(G) =n>0$, then every option $G'$ satisfies $\tilde{b}(G')\leq n-1$, and there is at least one option $G''$ with $\tilde{b}(G'')=n-1$.  Hence if $G \neq 0$, $\tilde{b}(G)=1+\max\{\tilde{b}(G') \mid G' \in \mathscr{G}^L \cup \mathscr{G}^R\}$~\cite[Exercise 1.6]{Siegel}.

There are four games in $\tilde{\mathbb{G}}_{1}$: $0=(\emptyset,\emptyset)$, which already in $\tilde{\mathbb{G}}_{0}$, $(\{0\}, \emptyset)$, $(\emptyset, \{0\})$ and $(\{0\},\{0\}\}$.  We usually denote the second of these by $1$, the third by $-1$, and the last by $*$, see~\cite[p. 59]{Siegel}.  

So $\tilde{b}(0)=0$, and $\tilde{b}(1)=1=\tilde{b}(-1)=\tilde{b}(*)$.

\begin{definition}
Let $G=(\mathscr{G}^L, \mathscr{G}^R)$ be a short game.  The \textbf{negative} $-G$ is defined recursively by  $-G=(-\mathscr{G}^R,-\mathscr{G}^L)$,
where for a set $\mathscr{S}$ of games, 
\[-\mathscr{S}=\begin{cases}
\emptyset, & \text{if $\mathscr{S}=\emptyset$;}\\
\{-G \mid G \in \mathscr{S}\}, & \text {if $\mathscr{S}\neq \emptyset$.}
\end{cases}
\]
\end{definition}
This is a slightly more explicit version of the usual definition~\cite[Definition 1.4]{Siegel}.

So $-0=-(\emptyset, \emptyset)=(-\emptyset,-\emptyset)=(\emptyset,\emptyset)=0$
and $-1=-(0,\emptyset)=(-\emptyset, -0)=(\emptyset,  0)$, which agrees with our notation above.

\begin{lemma}
Let $G$ be a short game. Then $-(-G)=G$.
\end{lemma}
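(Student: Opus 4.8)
The plan is to use strong induction on the formal birthday $\tilde{b}(G)$, exploiting the fact recorded just after the birthday definition that every option of a game $G \neq 0$ has strictly smaller formal birthday. For the base case, $\tilde{b}(G)=0$ forces $G=0$, and the computation $-0=0$ already carried out above gives $-(-0)=-0=0=G$ at once.

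For the inductive step, assume $\tilde{b}(G)=n>0$ and that $-(-H)=H$ holds for every short game $H$ with $\tilde{b}(H)<n$; in particular this applies to every left or right option of $G$. Writing $G=(\mathscr{G}^L,\mathscr{G}^R)$ and unfolding the definition of the negative twice, I get $-(-G) = -(-\mathscr{G}^R,-\mathscr{G}^L) = (-(-\mathscr{G}^L),-(-\mathscr{G}^R))$, so it suffices to establish the set-level identity $-(-\mathscr{S})=\mathscr{S}$ whenever $\mathscr{S}$ is a set of options of $G$, and then apply it to $\mathscr{S}=\mathscr{G}^L$ and $\mathscr{S}=\mathscr{G}^R$.

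To prove that set-level identity, I would split on whether $\mathscr{S}$ is empty. If $\mathscr{S}=\emptyset$, then $-\mathscr{S}=\emptyset$ and $-(-\mathscr{S})=-\emptyset=\emptyset=\mathscr{S}$. If $\mathscr{S}\neq\emptyset$, then $-\mathscr{S}=\{-G' \mid G'\in\mathscr{S}\}$ is again nonempty, so $-(-\mathscr{S})=\{-H \mid H\in-\mathscr{S}\}=\{-(-G') \mid G'\in\mathscr{S}\}$; since each $G'\in\mathscr{S}$ is an option of $G$ and hence satisfies $-(-G')=G'$ by the induction hypothesis, this last set equals $\{G' \mid G'\in\mathscr{S}\}=\mathscr{S}$. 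Substituting back into the displayed expression for $-(-G)$ yields $-(-G)=(\mathscr{G}^L,\mathscr{G}^R)=G$, completing the induction.

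This is fundamentally a bookkeeping argument, so the main point to watch — rather than a genuine obstacle — is keeping the two layers of the definition straight: the recursion on games is routed through the auxiliary operation $-(\cdot)$ on \emph{sets} of games, and because $=$ here denotes honest set equality, the identity $-(-\mathscr{S})=\mathscr{S}$ must be checked elementwise rather than dismissed by any appeal to game equivalence. It is also worth stating explicitly why the induction is well-founded, namely that every option $G'$ of a game $G\neq 0$ satisfies $\tilde{b}(G')\leq \tilde{b}(G)-1$, which is exactly the remark following the definition of $\tilde{b}$.
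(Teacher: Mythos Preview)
Your proof is correct and follows essentially the same approach as the paper: induction on $\tilde{b}(G)$, with the base case $-(-0)=0$ and the inductive step unfolding the definition twice to reach $(-(-\mathscr{G}^L),-(-\mathscr{G}^R))$. The only difference is that where the paper writes a single ``by induction'' to pass to $(\mathscr{G}^L,\mathscr{G}^R)$, you explicitly verify the set-level identity $-(-\mathscr{S})=\mathscr{S}$ with the empty/nonempty case split; this extra care is appropriate given the paper's emphasis on genuine set equality, but it is an elaboration of the same argument rather than a different route.
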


Note that here we are asserting that $-(-G)$ and $G$ are equal as sets.  Some texts and on-line course notes assert only that $-(-G) \cong G$.

\begin{proof}
By induction on $\tilde{b}(G)$.  If $\tilde{b}(G)=0$, then $G=0$ and $-(-0)=-0=0$.  When $\tilde{b}(G)>0$,
\begin{align*}
-(-G)&=-(-\mathscr{G}^R,-\mathscr{G}^L)\\
&=(-(-\mathscr{G}^L),-(-\mathscr{G}^R)\\
&=(\mathscr{G}^L,\mathscr{G}^R), && \text{by induction,}\\
&=G.
\end{align*}
\end{proof}

\begin{definition}\label{DefSumv2}
Let $G=(\mathscr{G}^L,\mathscr{G}^R)$ and $H=(\mathscr{H}^L,\mathscr{H}^R)$ be  short games.  Define the game $G+H$ recursively by
\[G+H=
((\mathscr{G}^L+H) \cup (G+\mathscr{H}^L),(\mathscr{G}^R+H) \cup (G+\mathscr{H}^R) )
\]
where for any set $\mathscr{S}$ of games and any game $K$, 
\[\mathscr{S}+K=
\begin{cases} 
\emptyset, & \text{if $\mathscr{S}=\emptyset$;}\\
\{S+K \mid S \in \mathscr{S}\}, &\text{if  $\mathscr{S} \neq \emptyset$,}
\end{cases}
\]
with a similar definition for $K+ \mathscr{S}$.
\end{definition}
Again, this is a slightly more explicit version of the usual definition~\cite[Definition 1.2]{Siegel}.

Is $G+H \in \tilde{\mathbb{G}}$?  More particularly, is $G+0$, which will arise at some stage in our calculation of $G+H$, an element of $\tilde{\mathbb{G}}$?  It would be if $G+0$ equals $G$ as sets, but this is not immediate from our definition of game sum.

Note that
\[0+0=(\emptyset,\emptyset)+(\emptyset,\emptyset)
=((\emptyset+0)\cup (0+\emptyset),(\emptyset+0)\cup (0+\emptyset))
=(\emptyset \cup\emptyset,\emptyset \cup \emptyset)=(\emptyset,\emptyset)=0.
\]

Now
\begin{align*}
1+0&=(\{0\},\emptyset)+(\emptyset,\emptyset)\\
&=(\{0+0\}\cup(1+\emptyset),(\emptyset+0) \cup(1+\emptyset))\\
&=(\{0\}\cup \emptyset,\emptyset \cup \emptyset)\\
&=(\{0\},\emptyset)\\
&=1
\end{align*}
and
\begin{align*}
1+1&=(\{0\},\emptyset)+(\{0\},\emptyset)\\
&=(\{0+1,1+0 \},(\emptyset+1) \cup (1+\emptyset))\\
&=(\{1,1 \}, \emptyset \cup \emptyset)\\
&=(\{1\}, \emptyset).
\end{align*}

\begin{lemma}\label{G+H}
Let $G \in \tilde{\mathbb{G}}_n$ and  $H \in \tilde{\mathbb{G}}_m$.  Then $G+H \in \tilde{\mathbb{G}}_{n+m}$.
\end{lemma}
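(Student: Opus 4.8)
The plan is to prove the lemma by induction on $n+m$. Although one could phrase this as a simultaneous induction on the formal birthdays, the indices $n$ and $m$ are exactly the quantities that control the recursion in Definition~\ref{DefSumv2}, so I would work with them directly. For the base case $n+m=0$ we have $n=m=0$, so $G=H=0$, and the computation $0+0=0\in\tilde{\mathbb{G}}_0$ recorded above settles it. (Incidentally, carrying this through also answers the question raised just before Definition~\ref{DefSumv2}: $G+H$ does land in $\tilde{\mathbb{G}}$.)

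For the inductive step, assume $n+m\geq 1$ and that the statement holds for every pair of indices with strictly smaller sum. By Definition~\ref{DefSumv2},
\[
G+H=\bigl((\mathscr{G}^L+H)\cup(G+\mathscr{H}^L),\;(\mathscr{G}^R+H)\cup(G+\mathscr{H}^R)\bigr),
\]
so since $n+m-1\geq 0$ it suffices to show that each of these four pieces is a subset of $\tilde{\mathbb{G}}_{n+m-1}$; once we know both coordinates of $G+H$ lie in $\tilde{\mathbb{G}}_{n+m-1}$, the definition of $\tilde{\mathbb{G}}_{n+m}$ gives $G+H\in\tilde{\mathbb{G}}_{n+m}$. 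A typical element of $\mathscr{G}^L+H$ has the form $G^L+H$ with $G^L\in\mathscr{G}^L$; the existence of such an element forces $\mathscr{G}^L\neq\emptyset$, hence $n\geq 1$ and therefore $G^L\in\tilde{\mathbb{G}}_{n-1}$, so the inductive hypothesis applied with indices $n-1$ and $m$ (whose sum is $n+m-1<n+m$) gives $G^L+H\in\tilde{\mathbb{G}}_{(n-1)+m}=\tilde{\mathbb{G}}_{n+m-1}$. Symmetrically, a typical element of $G+\mathscr{H}^L$ is $G+H^L$ with $H^L\in\mathscr{H}^L$, which forces $m\geq 1$ and $H^L\in\tilde{\mathbb{G}}_{m-1}$, and the inductive hypothesis yields $G+H^L\in\tilde{\mathbb{G}}_{n+(m-1)}=\tilde{\mathbb{G}}_{n+m-1}$. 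The two right-hand pieces are handled identically, completing the induction.

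The feature that makes this go through cleanly is that in each recursive call exactly one index drops by one, so the membership index comes out to be exactly $n+m-1$ on the nose; no appeal to the nesting $\tilde{\mathbb{G}}_0\subseteq\tilde{\mathbb{G}}_1\subseteq\cdots$ is needed, though one could invoke it if preferred. I do not anticipate a real obstacle: the only points needing care are the degenerate cases in which one of $\mathscr{G}^L,\mathscr{G}^R,\mathscr{H}^L,\mathscr{H}^R$ is empty (dealt with above by noting that an option can exist only when the corresponding index is positive) and, at the very outset, the implicit fact that the recursion defining $G+H$ terminates at all — which it does, since every recursive call strictly decreases $\tilde{b}(G)+\tilde{b}(H)$. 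In short, the lemma is essentially a bookkeeping/well-foundedness statement.
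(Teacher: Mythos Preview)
Your proof is correct and follows essentially the same approach as the paper's: induct, then observe that every option of $G+H$ has the form $G'+H$ or $G+H'$ and hence, by induction, lies in $\tilde{\mathbb{G}}_{n+m-1}$. The only cosmetic difference is the induction variable---you induct on $n+m$ (and handle the empty-option cases more explicitly), whereas the paper inducts on $\tilde{b}(G)+\tilde{b}(H)$; as you yourself note, these amount to the same thing.
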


\begin{proof}
By induction on $\tilde{b}(G)+\tilde{b}(H)$.  If $\tilde{b}(G)+\tilde{b}(H)=0$, then $n=0=m$, and so $G=0=H$, and $G+H=0\in \tilde{\mathbb{G}}_{0}=\tilde{\mathbb{G}}_{n+m}$.
If $\tilde{b}(G)+\tilde{b}(H)>0$, then every left option of $G+H$  has the form $G^L+H$ or $G+H^L$ for some left option $G^L$ of $G$ or $H^L$ of $G$.  By induction every left option of $G+H$ is in $\tilde{\mathbb{G}}_{n+m-1}$. Similarly every right option of $G+H$ is in  $\tilde{\mathbb{G}}_{n+m-1}$.  Hence $G+H \in \tilde{\mathbb{G}}_{n+m}$.
\end{proof}

\begin{lemma}
Let $G$ and $H$ be short games.  Then $\tilde{b}(G+H)=\tilde{b}(G)+\tilde{b}(H)$. \cite[Exercise 1.7]{Siegel}
\end{lemma}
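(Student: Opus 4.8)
The plan is to establish the two inequalities $\tilde{b}(G+H) \le \tilde{b}(G)+\tilde{b}(H)$ and $\tilde{b}(G+H) \ge \tilde{b}(G)+\tilde{b}(H)$ separately. Write $n=\tilde{b}(G)$ and $m=\tilde{b}(H)$, so that $G \in \tilde{\mathbb{G}}_n$ and $H \in \tilde{\mathbb{G}}_m$. The upper bound is immediate from Lemma~\ref{G+H}: that lemma gives $G+H \in \tilde{\mathbb{G}}_{n+m}$, and since $\tilde{b}(G+H)$ is by definition the least $k$ with $G+H \in \tilde{\mathbb{G}}_k$, we get $\tilde{b}(G+H) \le n+m$.

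For the lower bound I would induct on $n+m = \tilde{b}(G)+\tilde{b}(H)$. If $n+m=0$ then $G=H=0$, so $G+H=0$ and $\tilde{b}(G+H)=0=n+m$. Suppose now $n+m>0$, so that at least one of $n,m$ is positive. If $n>0$, then $G$ has an option $G'$ with $\tilde{b}(G')=n-1$, and by Definition~\ref{DefSumv2} the game $G'+H$ is an option of $G+H$ (a left option if $G'$ is a left option of $G$, a right option otherwise); in particular $G+H \ne 0$, so $\tilde{b}(G+H)=1+\max\{\tilde{b}(K)\mid K \text{ an option of } G+H\} \ge 1+\tilde{b}(G'+H)$. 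The inductive hypothesis applies to the pair $G',H$ since $\tilde{b}(G')+\tilde{b}(H)=(n-1)+m<n+m$, giving $\tilde{b}(G'+H)=(n-1)+m$, whence $\tilde{b}(G+H)\ge n+m$. If instead $m>0$, the identical argument run with an option $H'$ of $H$ satisfying $\tilde{b}(H')=m-1$ and the option $G+H'$ of $G+H$ yields $\tilde{b}(G+H)\ge n+m$.

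Combining the two bounds gives $\tilde{b}(G+H)=n+m=\tilde{b}(G)+\tilde{b}(H)$, as desired. I do not anticipate any real obstacle; the only points needing a little care are purely bookkeeping: checking that the chosen option $G'$ of $G$ (or $H'$ of $H$) really does produce an option of $G+H$ via Definition~\ref{DefSumv2}, and noting that $G+H \ne 0$ in the inductive step so that the recursive characterization $\tilde{b}(G+H)=1+\max\{\tilde{b}(K)\}$ is legitimate. One could alternatively absorb the upper bound into the same induction rather than citing Lemma~\ref{G+H}, but invoking that lemma keeps the argument shorter.
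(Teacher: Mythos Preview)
Your proposal is correct and follows essentially the same route as the paper: the upper bound is obtained from Lemma~\ref{G+H}, and the lower bound is established by induction on $\tilde{b}(G)+\tilde{b}(H)$, using an option $G'$ of $G$ (or $H'$ of $H$) of maximal birthday to produce an option $G'+H$ of $G+H$ with birthday $n+m-1$. Your write-up is slightly more explicit in justifying that $G+H\neq 0$ so that the recursive formula for $\tilde{b}$ applies, but the argument is otherwise the same.
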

\begin{proof}
Note that by Lemma~\ref{G+H}, $\tilde{b}(G+H)\leq \tilde{b}(G)+\tilde{b}(H)$.  By induction on $\tilde{b}(G)+\tilde{b}(H)$.  If $\tilde{b}(G)+\tilde{b}(H)=0$, then $\tilde{b}(G)=0=\tilde{b}(H)$, so $G=0=H$.  Since $0+0=0$, $\tilde{b}(0+0)=0=\tilde{b}(0)+\tilde{b}(0)$.  Assume that $\tilde{b}(G)+\tilde{b}(H)>0$.  WLOG we can assume that $\tilde{b}(G)>0$.  Then $G$ has an option $G'$, left or right, with $\tilde{b}(G')=\tilde{b}(G)-1$.  Then $G'+H$ is an option of $G+H$, and by induction $\tilde{b}(G'+H)=\tilde{b}(G')+\tilde{b}(H)=\tilde{b}(G)-1+\tilde{b}(H)$.  Hence $\tilde{b}(G+H)=\tilde{b}(G)+\tilde{b}(H)$.
\end{proof}

\begin{lemma}
Let $G$ be a short game.  Then $0+G=G=G+0$ as sets.
\end{lemma}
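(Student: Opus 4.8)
The plan is to prove $0+G=G$ and $G+0=G$ simultaneously by induction on $\tilde{b}(G)$, using the recursive definition of game sum. The base case is immediate: when $\tilde{b}(G)=0$ we have $G=0$, and the computation $0+0=0$ already displayed in the text after Definition~\ref{DefSumv2} does both halves at once.

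For the inductive step, suppose $\tilde{b}(G)>0$ and write $G=(\mathscr{G}^L,\mathscr{G}^R)$. Unwinding the definition,
\[
G+0=\bigl((\mathscr{G}^L+0)\cup(G+\emptyset),\ (\mathscr{G}^R+0)\cup(G+\emptyset)\bigr)=\bigl(\mathscr{G}^L+0,\ \mathscr{G}^R+0\bigr),
\]
since $G+\emptyset=\emptyset$ by the convention in the definition. So the claim reduces to showing $\mathscr{G}^L+0=\mathscr{G}^L$ and $\mathscr{G}^R+0=\mathscr{G}^R$ as sets. If $\mathscr{G}^L=\emptyset$ this is the defining convention; otherwise $\mathscr{G}^L+0=\{G'+0\mid G'\in\mathscr{G}^L\}$, and since every such $G'$ is an option of $G$ it has $\tilde{b}(G')<\tilde{b}(G)$, so by the induction hypothesis $G'+0=G'$, whence $\mathscr{G}^L+0=\{G'\mid G'\in\mathscr{G}^L\}=\mathscr{G}^L$. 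The same argument handles $\mathscr{G}^R$, giving $G+0=G$, and the symmetric computation starting from $0+G=\bigl(0+\mathscr{G}^L,\ 0+\mathscr{G}^R\bigr)$ gives $0+G=G$.

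The one genuine subtlety — and the reason this lemma is not entirely trivial in the set-equality setting — is the step where I pass from "$G'+0=G'$ for each element $G'$" to "$\mathscr{G}^L+0=\mathscr{G}^L$ as sets." This is fine, but it is worth being explicit that $\{G'+0\mid G'\in\mathscr{G}^L\}$ is literally the same set as $\mathscr{G}^L$ precisely because the map $G'\mapsto G'+0$ is the identity on the elements in question; no appeal to game equivalence is needed or made. I expect this set-theoretic bookkeeping, rather than any hard combinatorics, to be the only place where care is required, and it is exactly the point the paper is emphasizing. I would also note in passing that Lemma~\ref{G+H} guarantees $G+0$ lies in $\tilde{\mathbb{G}}$, so the expressions $G'+0$ appearing above are bona fide short games and the induction is well-founded.
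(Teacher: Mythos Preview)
Your proof is correct and follows essentially the same approach as the paper's: induction on $\tilde{b}(G)$, unwinding the definition of the sum, and invoking the inductive hypothesis on each option to conclude that the option sets agree as sets. The paper's version is terser---it treats only $0+G$ and leaves $G+0$ as symmetric---while you unpack the definition more explicitly and add commentary on the set-theoretic point, but the underlying argument is the same.
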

\begin{proof}
We will prove only that $0+G=G$.
By induction on $\tilde{b}(G)$.  If $\tilde{b}(G)=0$, then $G=0$ and $0+G=0+0=0=G$.  Assume that $\tilde{b}(G)>0$.  Then every left option of $0+G$ has the form $0+G^L$ where $G^L$ is a left option of $G$.  By induction $0+G^L=G^L$.  Thus the set of left options of $0+G$ equals the set of left options of $G$.  The corresponding argument for right options works.  It follows that $0+G=G$ as sets.
\end{proof}

Now
\[1+*=(0,\emptyset)+(0,0)=(\{1+0,0+*\},\{1+0\})=(\{1,*\},\{1\}),
\]
and

\[
*+1=(0,0)+(0,\emptyset)=(\{0+1,*+0\},\{0+1\})=(\{1,*\},\{1\})=1+*.
\]
Thus $1+*$ and $*+1$ are equal as sets.

\begin{lemma}
Let $G$ and $H$ be short games.  Then $G+H=H+G$, that is, $G+H$ and $H+G$ are equal as sets.
\end{lemma}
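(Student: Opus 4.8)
The plan is a straightforward induction on $\tilde{b}(G)+\tilde{b}(H)$, in the same style as the preceding lemmas. For the base case, $\tilde{b}(G)+\tilde{b}(H)=0$ forces $G=H=0$, and the identity $0+0=0$ established above gives $G+H=0=H+G$.

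For the inductive step, assume $\tilde{b}(G)+\tilde{b}(H)>0$ and write $G=(\mathscr{G}^L,\mathscr{G}^R)$ and $H=(\mathscr{H}^L,\mathscr{H}^R)$. By Definition~\ref{DefSumv2},
\[G+H=\big((\mathscr{G}^L+H)\cup(G+\mathscr{H}^L),\,(\mathscr{G}^R+H)\cup(G+\mathscr{H}^R)\big)\]
and
\[H+G=\big((\mathscr{H}^L+G)\cup(H+\mathscr{G}^L),\,(\mathscr{H}^R+G)\cup(H+\mathscr{G}^R)\big).\]
It is enough to show the two left-hand components agree as sets, since the argument for the right-hand components is word-for-word the same. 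The crux is the elementwise claim $\mathscr{G}^L+H=H+\mathscr{G}^L$ and $G+\mathscr{H}^L=\mathscr{H}^L+G$: if $\mathscr{G}^L=\emptyset$ then both $\mathscr{G}^L+H$ and $H+\mathscr{G}^L$ are $\emptyset$ by definition; otherwise $\mathscr{G}^L+H=\{G^L+H\mid G^L\in\mathscr{G}^L\}$ and $H+\mathscr{G}^L=\{H+G^L\mid G^L\in\mathscr{G}^L\}$, and for each left option $G^L$ of $G$ we have $\tilde{b}(G^L)\le\tilde{b}(G)-1$, hence $\tilde{b}(G^L)+\tilde{b}(H)<\tilde{b}(G)+\tilde{b}(H)$, so the inductive hypothesis yields $G^L+H=H+G^L$ and the two sets coincide. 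The same reasoning with the roles of $G$ and $H$ swapped gives $G+\mathscr{H}^L=\mathscr{H}^L+G$. Using commutativity of set union, the left component of $G+H$ is $(\mathscr{G}^L+H)\cup(G+\mathscr{H}^L)=(H+\mathscr{G}^L)\cup(\mathscr{H}^L+G)=(\mathscr{H}^L+G)\cup(H+\mathscr{G}^L)$, which is the left component of $H+G$. The right components match by the identical computation, so $G+H=H+G$.

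I expect no serious obstacle here; the only points requiring care are bookkeeping. First, one must justify that $\tilde{b}(G^L)+\tilde{b}(H)$ is strictly smaller than $\tilde{b}(G)+\tilde{b}(H)$, which rests on the fact noted after the definition of formal birthday: a game with a nonempty option set has positive birthday, and every option has birthday at most one less. Second, one should observe that by Lemma~\ref{G+H} every sum that appears is again a short game, so that ``equal as sets'' is meaningful throughout. Finally, the empty-option cases in the definitions of $\mathscr{S}+K$ and $K+\mathscr{S}$ must be handled separately, but these are immediate since both sides reduce to $\emptyset$.
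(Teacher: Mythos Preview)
Your proof is correct and follows essentially the same route as the paper's own argument: induction on $\tilde{b}(G)+\tilde{b}(H)$, with the base case $G=H=0$ and the inductive step matching left (and right) option sets via $G^L+H=H+G^L$ and $G+H^L=H^L+G$. You are simply more explicit than the paper in unpacking the definition of $\mathscr{S}+K$, separating out the empty-set case, and invoking commutativity of union, but the underlying idea is identical.
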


\begin{proof}
By induction on $\tilde{b}(G)+\tilde{b}(H)$.   f $\tilde{b}(G)+\tilde{b}(H)=0$, then $\tilde{b}(G)=0=\tilde{b}(H)$, whence $G=0=H$, and $0+0=0=0+0$.  Assume that $\tilde{b}(G+H)>0$.  Now every left option of $G+H$ has the form $G^L+H$ or $G+H^L$ for some left option $G^L$ of $G$ and or some left option $H^L$ of $H$.  By induction, $G^L+H=H+G^L$ and $G+H^L=H^L+G$.  Thus every left option of $G+L$ is a left option of $H+G$.  Similarly every left option of $H+G$ is a left option of $G+H$.  Hence the set of left options of $G+H$ equals the set of left options of $H+G$.  The argument showing equality of sets of right options is similar.  Thus $G+H=H+G$ as sets.
\end{proof}

Let us compare $(1+*)+(-1)$ and $1+(*+(-1))$.  First
\begin{align*}
(1+*)+(-1)&=(\{1,*\},\{1\})+(\emptyset,0)\\
&=(\{1+(-1),*+(-1)\},\{1+(-1),(1+*)+0\})\\
&=(\{1+(-1),*+(-1)\},\{1+(-1),1+*\}).
\end{align*}
Then since 
\[*+(-1)=(0,0)+(\emptyset,0)=(\{0+(-1)\},\{0+(-1),*+0\})=(\{-1\},\{-1,*\}),
\]
\begin{align*}
1+(*+(-1))&=(0,\emptyset)+(\{-1\},\{-1,*\})\\
&=(\{0+(*+(-1)), 1+(-1)\} ,\{ 1+(-1), 1+*\})\\
&=(\{*+(-1), 1+(-1)\},\{ 1+(-1),1+*\}).
\end{align*}

Thus $(1+*)+(-1)$ and $1+(*+(-1))$ are equal as sets.

\begin{lemma}
Let $G$, $H$, and $K$ be short games.  Then $(G+H)+K$ and $G+(H+K)$ are equal as sets.
\end{lemma}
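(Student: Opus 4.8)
The plan is to mimic the proofs of commutativity and of the additive identity given above: induct on the combined formal birthday $\tilde{b}(G)+\tilde{b}(H)+\tilde{b}(K)$, and at each stage show that the set of left options of $(G+H)+K$ coincides with the set of left options of $G+(H+K)$, the argument for right options being symmetric. The base case is immediate: if $\tilde{b}(G)+\tilde{b}(H)+\tilde{b}(K)=0$ then $G=H=K=0$, and both $(0+0)+0$ and $0+(0+0)$ equal $0$ by the computation $0+0=0$ already recorded.

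For the inductive step I would unwind Definition~\ref{DefSumv2} twice on each side. Expanding $(G+H)+K$, a left option is either $(G+H)^L+K$ or $(G+H)+K^L$; expanding $(G+H)^L$ once more, it is either $G^L+H$ or $G+H^L$. So the left options of $(G+H)+K$ are exactly the games of the form $(G^L+H)+K$, $(G+H^L)+K$, or $(G+H)+K^L$, as $G^L$, $H^L$, $K^L$ range over the left options of $G$, $H$, $K$ respectively. Doing the same on the other side, the left options of $G+(H+K)$ are exactly the games $G^L+(H+K)$, $G+(H^L+K)$, and $G+(H+K^L)$. Each of the triples $(G^L,H,K)$, $(G,H^L,K)$, $(G,H,K^L)$ has strictly smaller combined formal birthday, and by Lemma~\ref{G+H} all the intermediate sums are genuine short games, so the inductive hypothesis applies and yields $(G^L+H)+K=G^L+(H+K)$, $(G+H^L)+K=G+(H^L+K)$, and $(G+H)+K^L=G+(H+K^L)$. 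Hence the two sets of left options agree family by family. The right-option computation is identical with superscript $R$ in place of $L$, so $(G+H)+K$ and $G+(H+K)$ are equal as ordered pairs, i.e., as sets.

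I do not anticipate a genuine obstacle here; as with the earlier lemmas this is a routine structural induction. The step needing the most care is the bookkeeping when some of the option sets $\mathscr{G}^L$, $\mathscr{H}^L$, $\mathscr{K}^L$ (or their right counterparts) are empty: then the corresponding family of options is empty, by the convention $\mathscr{S}+K=\emptyset$ when $\mathscr{S}=\emptyset$ together with the fact that a union with $\emptyset$ changes nothing, and one must check that the same families drop out on both sides so the equality of sets is preserved. Once one is satisfied that the ``form of a left option'' description remains faithful in these degenerate cases, the argument goes through verbatim.
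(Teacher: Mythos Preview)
Your proposal is correct and follows essentially the same approach as the paper's own proof: induction on $\tilde{b}(G)+\tilde{b}(H)+\tilde{b}(K)$, unwinding Definition~\ref{DefSumv2} to identify the three families of left options on each side, and matching them up via the inductive hypothesis. Your version is in fact slightly more careful than the paper's, since you explicitly note why the inductive hypothesis applies (strictly smaller combined birthday, with Lemma~\ref{G+H} guaranteeing the intermediate sums are short games) and flag the empty-option-set bookkeeping; the paper leaves both of these implicit.
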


\begin{proof}
By induction on $\tilde{b}(G)+\tilde{b}(H)+\tilde{b}(K)$.   If $\tilde{b}(G)+\tilde{b}(H)+\tilde{b}(K)=0$, then $\tilde{b}(G)=0=\tilde{b}(H)=\tilde{b}(K)$, so $G=0=H=K$, and $(0+0)+0=0+0=0+(0+0)$.  Assume that $\tilde{b}(G)+\tilde{b}(H)+\tilde{b}(K)>0$.
A typical left option of $(G+H)+K$ has the form $(G^L+H)+K$, $(G+H^L)+K$, or $(G+H)+K^L$ where $G^L$, $H^L$, and $K^L$ are left options of $G$, $H$, and $K$ respectively.  By induction,
$(G^L+H)+K=G^L+(H+K)$, $(G+H^L)+K=G+(H^L+K)$, and $(G+H)+K^L=G+(H+K^L)$ as sets.  But $G^L+(H+K)$, $G+(H^L+K)$, and $G+(H+K^L)$ are left options of $G+(H+K)$.  Thus every let option of $(G+H)+K$ is a left option of $G+(H+K)$.  Similarly every left option of $G+(H+K)$ is a left option of $(G+H)+K$.  Hence the set of left options of $(G+H)+K$ equals the set of left options of $G+(H+K)$. The argument showing equality of sets of right options is similar. It follows that $(G+H)+K$ and $G+(H+K)$ are equal as sets.
\end{proof}

\section{Conclusion}

We have shown that from the above definitions of the negative of a game and the sum of games, we can prove set equality and not just game equivalence for the pairs of games $-(-G)$ and $G$, $G+H$ and $H+G$, and $(G+H)+K$ and $G+(H+K)$.

\end{document}